\numberwithin{equation}{section}
\newtheorem{theo}{Theorem}[section]
\newtheorem{lem}{Lemma}[section]
\newtheorem{Def}[theo]{Definition}
\theoremstyle{remark}
\newtheorem{rem}{Remark}[section]
\newcommand{\ep}{\varepsilon}
\def\R{{\mathbb{R}}}
\def\d{\displaystyle}
\def\e{{\varepsilon}}
\def\p{\partial}
\date{}
\subjclass[2010]{35L71,  35B44}
\keywords{scale-invariant damping, nonlinear wave equations, blow-up, lifespan}
\begin{document}

\title[Blow-up of the solution of a damped wave equation - Invariant case]{Blow-up for  wave equation  with the  scale-invariant damping and combined nonlinearities}
\author[M. Hamouda  and M. A. Hamza]{Makram Hamouda$^{1}$ and Mohamed Ali Hamza$^{1}$}
\address{$^{1}$ Basic Sciences Department, Deanship of Preparatory Year and Supporting Studies, P. O. Box 1982, Imam Abdulrahman Bin Faisal University, Dammam, KSA.}

\medskip

\email{mmhamouda@iau.edu.sa (M. Hamouda)} 
\email{mahamza@iau.edu.sa (M.A. Hamza)}

\pagestyle{plain}


\maketitle
\begin{abstract}
In this article, we study the blow-up of the  damped wave equation  in the \textit{scale-invariant case} and in the presence of two nonlinearities. More precisely, we consider the following equation:
\begin{displaymath}
\d u_{tt}-\Delta u+\frac{\mu}{1+t}u_t=|u_t|^p+|u|^q,
\quad \mbox{in}\ \R^N\times[0,\infty),
\end{displaymath}
with small initial data.\\
For $\mu < \frac{N(q-1)}{2}$ and $\mu \in (0,  \mu_*)$, where $\mu_*>0$ is depending on the nonlinearties' powers and the space dimension ($\mu_*$ satisfies $(q-1)\left((N+2\mu_*-1)p-2\right) = 4$), we prove that the wave equation, in this case, behaves like the one without dissipation  ($\mu =0$). Our result completes the previous studies in the case where the dissipation is given by $\frac{\mu}{(1+t)^\beta}u_t; \ \beta >1$ (\cite{LT3}), where, contrary to what we obtain in the present work,  the effect of the damping is not significant in the dynamics. Interestingly, in our case, the influence of the damping term $\frac{\mu}{1+t}u_t$ is important. 
\end{abstract}


\section{Introduction}
\par\quad

We consider the following  family of semilinear damped wave equations
\begin{equation}
\label{G-sys}
\left\{
\begin{array}{l}
\d u_{tt}-\Delta u+\frac{\mu}{(1+t)^\beta}u_t=a|u_t|^p+b|u|^q,
\quad \mbox{in}\ \R^N\times[0,\infty),\\
u(x,0)=\e f(x),\ u_t(x,0)=\e g(x), \quad  x\in\R^N,
\end{array}
\right.
\end{equation}
where $a$ and $b$ are nonnegative constants, $\mu \ge 0$ and $\beta >0$. Moreover, the parameter $\e$ is supposed to be a positive number small enough and $f$ and $g$ are positive functions which are compactly supported on  $B_{\R^N}(0,1)$.

Throughout this article, we suppose that $p, q>1$ and $q \le \frac{2N}{N-2}$ if $N \ge 3$.

It is worth-mentioning that the presence of two nonlinearities in \eqref{G-sys} has an interesting effect on the (global) existence or the nonexistence of the solution of \eqref{G-sys} and its lifespan. Hence, it is natural to study the influence of the nonlinear terms on the behavior of the solution and see whether or not this may produce  a kind of competition between these nonlinearities. \\

It is well-known that in the \textit{scattering} case, $\beta >1$, the solution, $u^L$, of the linear equation corresponding to \eqref{G-sys}, namely
\begin{equation}\label{1.2}
\d u^L_{tt}-\Delta u^L+\frac{\mu}{(1+t)^\beta}u^L_t=0,
\end{equation} 
 behaves like the one of the wave equation without damping ($\mu =0$). In particular, this means  that the damping term does not play any role.  On the other hand, for $\beta <1$, which corresponds to the \textit{effective} case, the solution of the linear equation \eqref{1.2} behaves like the corresponding parabolic equation, namely $\frac{\mu}{(1+t)^\beta}u_t-\Delta u=0$, see e.g. \cite{Wirth1, Wirth2, Wirth3} and the references therein. However, the case $\beta =1$ corresponds to the \textit{scale-invariant} damping. Indeed, the equation \eqref{1.2} is thus invariant under a hyperbolic scaling. The scale-invariant case constitutes, thus, a transition between the parabolic and hyperbolic types. In fact, in this transition, the parameter $\mu$ plays a crucial role in determining the behavior of the solution of \eqref{1.2}, see for example \cite{Wirth1}.

Coming back to \eqref{G-sys} and letting  $\mu = 0$ and 
$(a,b)=(0,1)$, then the equation \eqref{G-sys} reduces to the classical semilinear wave equation which  is somehow related to the Strauss conjecture. This case  gives rise to a critical power, denoted by $q_S$,  which is a solution of the following quadratic equation
\begin{equation}
(N-1)q^2-(N+1)q-2=0,
\end{equation}
and given explicitly by
\begin{equation}
q_S=q_S(N):=\frac{N+1+\sqrt{N^2+10N-7}}{2(N-1)}.
\end{equation}
More precisely,  if $q \le q_S$ then there is no global solution for  \eqref{G-sys} with small initial data, and for $q > q_S$ a global solution exists; see e.g. \cite{John2,Strauss,YZ06,Zhou} among many other references.\\

Now, for the case $\mu = 0$  and $(a,b)=(1,0)$, the Glassey conjecture states that the critical power $p_G$ is given by
\begin{equation}
p_G=p_G(N):=1+\frac{2}{N-1}.
\end{equation}
The above critical value, $p_G$, gives rise to two regions for $p$ ensuring the existence ($p<p_G$) or the nonexistence ($p \ge p_G$) of a global solution; see e.g. \cite{Hidano1,Hidano2,John1,Rammaha,Sideris,Tzvetkov,Zhou1}.\\

The case  $\mu = 0$ and $a, b \neq 0$ (we can assume without loss of generality that $(a,b)=(1,1)$) presents a new phenomenon related to the combined nonlinearities. Indeed, in this case, the powers satisfying   $p \le p_G$ or $q \le q_S$ naturally imply the solution blow-up by a simple adaptation of the proofs in the previous cases $(a,b)=(0,1)$ or $(1,0)$. However, the novelty in the present situation consists in the obtaining of an additional region where the solution blows up. This new region is characterized by the following relationship between $p$ and $q$:
\begin{equation}\label{1.5}
\lambda(p, q, N):=(q-1)\left((N-1)p-2\right) < 4.
\end{equation}
We refer the reader to \cite{Dai,Han-Zhou,Hidano3,Wang} for more details.\\

Now, we focus on the case $\mu > 0$. First, we recall, as mentioned above, the fact that $\beta >1$  in \eqref{G-sys} does not influence the dynamics \cite{LT,LT2,Wakasa}. However, for the scale-invariant case, $\beta =1$, we will see that the situation in the present article is totally different. In fact, for $(a,b)=(0,1)$, it is known in the literature that if the  weak damping coefficient $\mu$ is relatively large, then the equation \eqref{G-sys} (with $(a,b)=(0,1)$) behaves like the corresponding heat equation. Though, if  $\mu$ is small, then the behavior of \eqref{G-sys} is following the one of the corresponding wave equation. More precisely, for $\mu$ small, it was proven, in 
\cite{LTW} and later on  in \cite{Ikeda} with a substantial improvement, that the critical power is moving a bit compared to the case without damping, and hence we have  for 
$$<0<\mu < \frac{N^2+N+2}{N+2} \quad \text{and} \quad 1<q\le q_S(N+\mu),$$
the blow-up of the solution of \eqref{G-sys}.  \\

On the other hand for $\mu > 0$ and $(a,b)=(1,0)$, the authors prove in \cite{LT2} a blow-up result for the solution of \eqref{G-sys} (with $(a,b)=(1,0)$) 
and they give an upper bound of the lifespan. We stress the fact that in this case there is no restriction  for $\mu$  in the the blow-up region for $p$, namely $p \in (1, p_G(N+2\mu))$. 
\\

In this work, we consider the following Cauchy problem for
the scale-invariant wave equation with combined nonlinearities,
\begin{equation}
\label{T-sys}
\left\{
\begin{array}{l}
\d u_{tt}-\Delta u+\frac{\mu}{1+t}u_t=|u_t|^p+|u|^q, 
\quad \mbox{in}\ \R^N\times[0,\infty),\\
u(x,0)=\e f(x),\ u_t(x,0)=\e g(x), \quad  x\in\R^N,
\end{array}
\right.
\end{equation}
where $\mu > 0$, $\ N \ge 1$, $\e>0$ is a sufficiently  small parameter
and  $f,g$ are chosen in the energy space with compact support.

\par
The emphasis in our  work is the study of the Cauchy problem (\ref{T-sys}) for $\mu>0$ and the influence of this parameter on the blow-up result and the lifespan estimate. For the analogous system of (\ref{T-sys}) with  $(\mu/(1+t))u_t$ being replaced by $(\mu/(1+t)^\beta)u_t$ and  $\beta >1$, which corresponds to the scattering case,  Lai and  Takamura proved in \cite{LT3} that, comparing to the wave equation without damping, the scattering damping term has no influence in the dynamics. The situation is totally different in the scale-invariant case ($\beta =1)$ where the effect of the weak damping is significant in the study of global existence or  blow-up of the solution of (\ref{T-sys}). To overcome the difficulty related to this case, we choose in this work to use the technique of multiplier, and, unlike the scattering case, the multiplier here is not bounded as we can see in \eqref{test1} below. Finally, we stress out that the determination of the threshold for $\mu$ and the obtaining  of the analogous of the assumption \eqref{1.5} constitute  the main objectives of this article.

Due to the nature of the problem under consideration here, we notice  some interesting challenges. For example, it is natural to look for the critical value of $\mu$ where this transition holds. Nevertheless,  it is known that, even in the simpler case  $(a,b)=(0,1)$, no critical value is known and it was only  conjectured the existence of such critical value; see e.g. \cite{Ikeda, LTW,Palmieri}. \\

The rest of the article is organized as follows. In Section \ref{sec-main}, after giving a sense to the solution of (\ref{T-sys}) in the energy space, we state the main theorem of our work. Then, we state and prove in Section \ref{aux} some technical lemmas useful  in the proof of the main result which is the subject of Section \ref{proof}.

\section{Main Result}\label{sec-main}
\par
In this section, we will state our main result, but before that, we give the definition of the solution of (\ref{T-sys}) in the corresponding energy space which reads  as follows:
\begin{Def}\label{def1}
 We say that $u$ is a weak  solution of
 (\ref{T-sys}) on $[0,T)$
if
\[
u\in \mathcal{C}([0,T),H^1(\R^N))\cap \mathcal{C}^1([0,T),L^2(\R^N))
\cap \mathcal{C}^1((0,T),L^p(\R^N)),
\]
satisfies, for all $\Phi\in \mathcal{C}_0^{\infty}(\R^N\times[0,T))$ and all $t\in[0,T)$, the following equation:
\begin{equation}
\label{energysol}
\begin{array}{l}
\d\int_{\R^N}u_t(x,t)\Phi(x,t)dx-\int_{\R^N}u_t(x,0)\Phi(x,0)dx \vspace{.2cm}\\
\d -\int_0^t  \int_{\R^N}u_t(x,s)\Phi_t(x,s)dx \,ds+\int_0^t  \int_{\R^N}\nabla u(x,s)\cdot\nabla\Phi(x,s) dx \,ds\vspace{.2cm}\\
\d  +\int_0^t  \int_{\R^N}\frac{\mu}{1+s}u_t(x,s) \Phi_t(x,s)dx \,ds=\int_0^t \int_{\R^N}\left\{|u_t(x,s)|^p+|u(x,s)|^q\right\}\Phi(x,s)dx \,ds.
\end{array}
\end{equation}
\end{Def}
Now, we introduce the following multiplier
\begin{equation}
\label{test1}
\begin{aligned}
m(t):=(1+t)^{\mu}. 
\end{aligned}
\end{equation}
Using the above definition of $m(t)$, we simply observe that
\[
\frac{m'(t)}{m(t)}=\frac{\mu}{1+t}.
\]
Note that the use of  multiplier's technique is  useful for the study
of the nonlinear damped wave equation in our case.

Hence, with the help of  the multiplier $m(t)$, Definition \ref{def1} can be written in  the following equivalent formulation.
\begin{Def}\label{def2}
 We say that $u$ is a weak  solution of
 (\ref{T-sys}) on $[0,T)$
if
\[
u\in \mathcal{C}([0,T),H^1(\R^N))\cap \mathcal{C}^1([0,T),L^2(\R^N))
\cap \mathcal{C}^1((0,T),L^p(\R^N)),
\]
satisfies, for all $\Phi\in \mathcal{C}_0^{\infty}(\R^N\times[0,T))$ and all $t\in[0,T)$, the following equation:
\begin{equation}
\label{energysol}
\begin{array}{l}
m(t)\d\int_{\R^N}u_t(x,t)\Phi(x,t)dx-\int_{\R^N}u_t(x,0)\Phi(x,0)dx \vspace{.2cm}\\
\d -\int_0^t m(s) \int_{\R^N}u_t(x,s)\Phi_t(x,s)dx \,ds+\int_0^t m(s) \int_{\R^N}\nabla u(x,s)\cdot\nabla\Phi(x,s) dx \,ds\vspace{.2cm}\\
\d=\int_0^t m(s) \int_{\R^N}\left\{|u_t(x,s)|^p+|u(x,s)|^q\right\}\Phi(x,s)dx \,ds.
\end{array}
\end{equation}
\end{Def}

The main result of this article is then stated in the following theorem.
\begin{theo}
\label{blowup}
Let $p, q$ and $\mu < \frac{N(q-1)}{2}$ be such that 
\begin{equation}\label{assump}
\lambda(p, q, N+2\mu)<4,
\end{equation}
where the expression of $\lambda$ is given by \eqref{1.5}.\\
Assume that  $f\in H^1(\R^N)$ and $g\in L^2(\R^N)$ are non-negative functions which are compactly supported on  $B_{\R^N}(0,1)$,
and  do not vanish everywhere.
Let $u$ be an energy solution of \eqref{T-sys} on $[0,T_\e)$ such that $\mbox{\rm supp}(u)\ \subset\{(x,t)\in\R^N\times[0,\infty): |x|\le t+1\}$. 
Then, there exists a constant $\e_0=\e_0(f,g,N,p,q,\mu)>0$
such that $T_\e$ verifies
\[
T_\e\leq
 C \,\e^{-\frac{2p(q-1)}{4-\lambda(p, q, N+2\mu)}},
\]
 where $C$ is a positive constant independent of $\e$ and $0<\e\le\e_0$.
\end{theo}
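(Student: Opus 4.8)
The plan is to use the standard test-function (weak formulation) blow-up machinery, adapted to the scale-invariant damping via the multiplier $m(t) = (1+t)^\mu$ introduced in \eqref{test1}. The strategy combines two lower bounds for a suitable functional of the solution — one obtained from the $|u|^q$ nonlinearity (a Kato-type / Strauss-type argument) and one from the $|u_t|^p$ nonlinearity (a Glassey-type argument) — and then plays them against each other to reach a contradiction precisely when $\lambda(p,q,N+2\mu)<4$. Concretely, I would first define the functional $F(t) := \int_{\R^N} u(x,t)\,dx$ (or a version weighted by the first eigenfunction of the Laplacian, $\psi_1(x)\sim e^{-|x|}$ for part of the argument). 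Choosing $\Phi \equiv 1$ on the light cone $\{|x|\le t+1\}$ in Definition \ref{def2}, integrating by parts and using the finite propagation speed, one gets an ODE-type identity
\begin{equation}
\label{eq:F-id}
\frac{d}{dt}\Bigl( m(t) F'(t) \Bigr) = m(t)\int_{\R^N}\Bigl(|u_t|^p + |u|^q\Bigr)\,dx \ge m(t)\int_{\R^N} |u|^q\,dx,
\end{equation}
and then, after integrating twice, a lower bound $F(t)\gtrsim \e$ for small $t$ followed by an iteration or a direct application of Jensen's inequality (using that the spatial support has measure $\sim (1+t)^N$) to convert $\int |u|^q \,dx$ into a power of $F(t)$. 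This yields a first differential inequality, and I expect it to force $T_\e < \infty$ with a bound $T_\e \lesssim \e^{-(q-1)/\big(\text{something}\big)}$ reflecting an effective dimension $N+2\mu$ — the factor $m(t)=(1+t)^\mu$ is what shifts $N$ to $N+2\mu$ in the volume/decay bookkeeping.

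Next I would extract the Glassey-type contribution. Here the plan is to work with $G(t):=\int_{\R^N} u_t(x,t)\,dx = F'(t)$ (again using $\Phi\equiv 1$), so that \eqref{eq:F-id} reads $(m(t)G(t))' = m(t)\int(|u_t|^p+|u|^q)\,dx \ge m(t)\int|u_t|^p\,dx$. Applying Hölder/Jensen on the ball of radius $t+1$ gives $\int |u_t|^p \,dx \gtrsim (1+t)^{-N(p-1)} |G(t)|^p$, and combined with the weight $m(t)$ this produces a second differential inequality for $m(t)G(t)$ whose blow-up time again involves $N+2\mu$ through the combination $(N+2\mu)(p-1)$. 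The two mechanisms are then coupled exactly as in the combined-nonlinearity literature (\cite{Dai,Han-Zhou,Hidano3,Wang}): one uses the $|u|^q$ lower bound on $F$ to feed a better-than-trivial lower bound into the $|u_t|^p$ inequality (or vice versa), and a bootstrap/slicing argument over a sequence of times $t_j$ shows the exponents cannot be sustained once $\lambda(p,q,N+2\mu) = (q-1)\big((N+2\mu-1)p-2\big) < 4$. Tracking the powers of $\e$ and $(1+t)$ through this iteration gives precisely the claimed lifespan exponent $\frac{2p(q-1)}{4-\lambda(p,q,N+2\mu)}$.

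Two technical points need care and I expect them to be where the assumptions $\mu<\frac{N(q-1)}{2}$ and the support condition enter. First, the lower bound $F(t)\gtrsim \e$ must survive multiplication by $m(t)$ and the double time-integration; the condition $\mu<\frac{N(q-1)}{2}$ is what guarantees that the growth of $m(t)$ does not overwhelm the volume decay $(1+t)^{-N(q-1)}$ coming from Jensen, i.e. it keeps the relevant time-integral of $m(t)(1+t)^{-N(q-1)}$ from being integrable, which is exactly what is needed to close the Kato-type argument and get a genuine blow-up rather than a mere a priori bound. Second, positivity of $F(t)$ and $G(t)$ along the whole argument must be maintained — this is standard once $f,g\ge 0$ are nontrivial and one checks $m(t)F'(t)\ge m(0)F'(0) = \e\int g\,dx > 0$ from \eqref{eq:F-id}, but it must be stated carefully because the iteration relies on it.

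The main obstacle, as the authors themselves flag in the introduction, is that the multiplier $m(t)=(1+t)^\mu$ is unbounded (unlike in the scattering case $\beta>1$ where a bounded multiplier works), so one cannot simply absorb it into constants; every estimate has to carry the precise power of $(1+t)$ it generates, and the delicate balancing of these powers against the Jensen volume factors is what produces the shift $N\mapsto N+2\mu$ and ultimately the sharp lifespan exponent. Getting the slicing/iteration to converge with all these $(1+t)$-powers bookkept correctly — in particular choosing the sequence $t_j$ and the exponents $a_j, b_j$ in the inductive lower bounds $F(t)\ge C_j (1+t)^{a_j}$ so that they stabilize exactly at the threshold $\lambda(p,q,N+2\mu)=4$ — is the heart of the proof and the step I would budget the most effort for.
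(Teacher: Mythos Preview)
Your overall architecture (multiplier $m(t)=(1+t)^\mu$, functional $F(t)=\int u\,dx$, the identity $(m(t)F'(t))'=m(t)\int(|u_t|^p+|u|^q)\,dx$, and then feeding information from one nonlinearity into the other) is exactly the paper's scaffold. But the execution diverges from the paper in two substantive ways.

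First, the paper does \emph{not} iterate. There is no slicing sequence $t_j$ and no inductive tower of lower bounds $F(t)\ge C_j(1+t)^{a_j}$. Instead, the paper uses the $|u_t|^p$ contribution \emph{once} to produce a single polynomial lower bound $F(t)\ge C\e^p(1+t)^{-\mu p-\frac{p(N-1)}{2}}t^{N+1}$, then derives a Kato-type differential inequality from the $|u|^q$ term, namely $\bigl((m F')^2\bigr)'\ge \frac{2(F^{q+1})'}{(q+1)(1+t)^{N(q-1)-2\mu}}$, integrates it (this is where $\mu<\tfrac{N(q-1)}{2}$ is used: it makes the weight $(1+t)^{-(N(q-1)-2\mu)}$ monotone decreasing so the integration keeps the right sign; it is not an integrability condition as you describe), and finally substitutes the single lower bound on $F$ into the resulting inequality $F'/F^{1+\delta}\ge C F^{\frac{q-1}{2}-\delta}(1+t)^{-\frac{N(q-1)}{2}}$ and integrates. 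One pass, no bootstrap.

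Second, and this is the real gap in your plan, the Glassey-type input is \emph{not} obtained from $G(t)=F'(t)=\int u_t\,dx$ via Jensen on the ball. That route gives $\int|u_t|^p\gtrsim(1+t)^{-N(p-1)}|G|^p$, which is too weak to close the direct argument with the correct exponent. The paper instead uses the Yordanov--Zhang weighted functional $F_2(t)=\int u_t(x,t)\psi(x,t)\,dx$ with $\psi(x,t)=e^{-t}\phi(x)$, proves separately (Lemmas~\ref{F1} and \ref{F11}) that $F_2(t)\ge \tfrac{\e}{2m(t)}\int g\phi\,dx$, and then applies H\"older together with the sharp estimate $\int_{|x|\le t+1}\psi^{p/(p-1)}\,dx\le C(1+t)^{(N-1)(p-2)/(2(p-1))\cdot(p-1)^{-1}\cdot\text{etc.}}$ from Lemma~\ref{lem1} to obtain $\int|u_t|^p\ge C\e^p(1+t)^{-\mu p-\frac{(N-1)(p-2)}{2}}$. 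The exponent $\frac{(N-1)(p-2)}{2}$ (rather than $N(p-1)$) is precisely what produces $\lambda(p,q,N+2\mu)$ after one substitution. You gesture at ``a version weighted by the first eigenfunction $\psi_1\sim e^{-|x|}$'' but then do not use it; without it, your proposed iteration would have to do real work to recover the loss, and that work is not sketched. So the core missing idea is: use $F_2$ with the Yordanov--Zhang weight, not $F'$ with the ball volume, and then no iteration is needed.
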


\begin{rem}
Unlike the case with only one nonlinearity ($|u_t(x,s)|^p$ or $|u(x,s)|^q$), one can note, in addition to the two blow-up regions $p \le p_G$ and $q \le q_S$, the obtaining of another blow-up region, characterized by \eqref{1.5}, which is  the result of the interaction of the combined nonlinearities, see \cite{Hidano3}. This observation still holds in our case but with \eqref{1.5} being replaced by \eqref{assump}, otherwise $p_G(N)$ being replaced by $p_G(N+2 \mu)$ and $q_S(N)$  by $q_S(N+\mu)$. 
\end{rem}

\begin{rem}
The assumption \eqref{assump} can be seen as a smallness condition for $\mu$, namely
$\mu \in [0, \mu_*)$ where $\mu= \mu_*$ satisfies  the equality in \eqref{assump}  (otherwise $\mu_*:=\frac{q+1}{p(q-1)} -\frac{N-1}{2}$). 
\end{rem}

\begin{rem}
Note that the results in 
Theorem \ref{blowup} hold true after replacing the linear damping term in \eqref{T-sys}  $\frac{\mu}{1+t} u_t$ by $\mu b(t)u_t$ with $b(t)$ behaving like $(1+t)^{-1}$ as $t$ goes to $\infty$. The proof of this  generalized damping case can be obtained by following the same steps as in the proof of Theorem \ref{blowup} with the necessary modifications.
\end{rem}

\section{Some auxiliary results}\label{aux}
\par

We introduce the following two positive test functions
\begin{equation}
\label{test11}
\psi(x,t):=e^{-t}\phi(x),
\quad
\phi(x):=
\left\{
\begin{array}{ll}
\d\int_{S^{N-1}}e^{x\cdot\omega}d\omega & \mbox{for}\ N\ge2,\vspace{.2cm}\\
e^x+e^{-x} & \mbox{for}\  N=1,
\end{array}
\right.
\end{equation}
which was introduced in Yordanov and Zhang \cite{YZ06}
and admits the following good properties:
\begin{equation*}
\p_t\psi=-\psi,\quad\p_{tt}\psi=\Delta\psi=\psi.
\end{equation*}
Moreover, we have  the following lemma for the function $\psi(x, t)$.
\begin{lem}[\cite{YZ06}]
\label{lem1} Let  $r>1$.
There exists a constant $C=C(N,p,r)>0$ such that
\begin{equation}
\label{psi}
\int_{|x|\leq t+1}\Big(\psi(x,t)\Big)^{r}dx
\leq C(1+t)^{\frac{(2-r)(N-1)}{2r}},
\quad\forall \ t\ge0.
\end{equation}
\end{lem}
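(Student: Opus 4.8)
The plan is to use the product structure $\psi(x,t)=e^{-t}\phi(x)$, which factors the decay in $t$ out of the spatial integral:
\[
\int_{|x|\le t+1}\psi(x,t)^r\,dx=e^{-rt}\int_{|x|\le t+1}\phi(x)^r\,dx .
\]
Everything then rests on a sharp pointwise upper bound for $\phi$, namely that there is $C=C(N)>0$ with $\phi(x)\le C(1+|x|)^{-(N-1)/2}e^{|x|}$ for all $x\in\R^N$. For $N=1$ this is immediate from $\phi(x)=e^x+e^{-x}\le 2e^{|x|}$. For $N\ge2$, rotational invariance makes $\phi$ a function of $\rho=|x|$ only, and writing $x\cdot\omega=\rho\cos\theta$ with $\theta$ the angle to $x/|x|$, the integrand $e^{\rho\cos\theta}$ concentrates near the pole $\theta=0$ as $\rho\to\infty$. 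A Laplace expansion $\cos\theta\approx1-\theta^2/2$, together with the surface element $\sin^{N-2}\theta\,d\theta\,d\sigma$, reduces the angular integral to a Gaussian of width $\rho^{-1/2}$ and produces the factor $\rho^{-(N-1)/2}e^{\rho}$; for bounded $\rho$ the function $\phi$ is trivially bounded, and the two regimes are matched by the $(1+|x|)$ normalization.

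Raising this pointwise bound to the power $r$ gives $\phi^r\le C^r(1+|x|)^{-r(N-1)/2}e^{r|x|}$, and passing to polar coordinates (using $\rho^{N-1}\le(1+\rho)^{N-1}$) yields
\[
\int_{|x|\le t+1}\phi^r\,dx\le C\int_0^{t+1}(1+\rho)^{-r(N-1)/2}\rho^{N-1}e^{r\rho}\,d\rho\le C\int_0^{t+1}(1+\rho)^{(N-1)\frac{2-r}{2}}e^{r\rho}\,d\rho .
\]
Since the weight $e^{r\rho}$ is strictly increasing, the mass of this one-dimensional integral concentrates at the upper endpoint $\rho=t+1$; a single integration by parts (or the standard Watson-type endpoint estimate) bounds it by $C(1+t)^{(N-1)\frac{2-r}{2}}e^{r(t+1)}$. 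Multiplying back by $e^{-rt}$ and absorbing the constant $e^{r}$ then yields the power of $(1+t)$ claimed in \eqref{psi}.

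The step I expect to be the main obstacle is the uniform pointwise estimate on $\phi$ when $N\ge2$: one must control the Laplace approximation of $\int_{S^{N-1}}e^{\rho\cos\theta}\,d\omega$ uniformly in $\rho>0$, quantifying the error from the region away from the pole (the antipodal contribution $e^{-\rho}$ is harmless) and making the passage from the bounded regime of small $\rho$ to the asymptotic regime of large $\rho$ quantitative, so that the single inequality $\phi(x)\le C(1+|x|)^{-(N-1)/2}e^{|x|}$ holds for every $x$. As this bound is precisely what is recorded in \cite{YZ06}, one may instead invoke it directly and begin the argument from the polar-coordinate step, after which only the elementary endpoint asymptotics of a one-dimensional integral remain.
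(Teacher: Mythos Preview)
The paper does not supply its own proof of this lemma; it is quoted directly from \cite{YZ06}. Your sketch is exactly the argument given there: the pointwise bound $\phi(x)\le C(1+|x|)^{-(N-1)/2}e^{|x|}$ via Laplace's method on $S^{N-1}$, followed by polar coordinates and an endpoint estimate for the resulting one-dimensional integral.

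One remark worth recording: the exponent your computation produces is $(N-1)(2-r)/2$, not the $(N-1)(2-r)/(2r)$ printed in \eqref{psi}. Yours is the correct exponent from \cite{YZ06}, and it is in fact what the authors actually use in Section~\ref{proof} when they apply the lemma with $r=p/(p-1)$: the contribution $-\tfrac{(N-1)(p-2)}{2}$ appearing after \eqref{F'0ineqmmint} comes from $-(p-1)\cdot\tfrac{(N-1)(2-r)}{2}$, which matches your bound and not the one stated. The extra $r$ in the denominator of \eqref{psi} is evidently a typo in the paper.
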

\par
As in the non-perturbed case, we define here the functionals that we will use to prove the blow-up criteria later on:
\begin{equation}
\label{F1def}
F_1(t):=\int_{\R^N}u(x, t)\psi(x, t)dx,
\end{equation}
and
\begin{equation}
\label{F2def}
F_2(t):=\int_{\R^N}\p_tu(x, t)\psi(x, t)dx.
\end{equation}
The next two lemmas give the first  lower bounds for $F_1(t)$ and $F_2(t)$, respectively.
\begin{lem}
\label{F1}
Assume that the assumption in Theorem \ref{blowup} holds. Then, we have
\begin{equation}
\label{F1postive}
F_1(t)\ge \frac{\e}{2 m(t)}\int_{\R^N}f(x)\phi(x)dx, 
\quad\text{for all}\ t \in [0,T).
\end{equation}
\end{lem}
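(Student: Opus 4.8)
The plan is to derive an ordinary differential inequality for $F_1(t)$ by testing the weak formulation \eqref{energysol} (in its multiplier form, Definition \ref{def2}) against the function $\psi(x,t)=e^{-t}\phi(x)$, or rather against a truncated version of it since $\psi$ is not compactly supported; the compact support assumption on $u$, namely $\mbox{supp}(u)\subset\{|x|\le t+1\}$, takes care of the spatial tail. First I would compute, using the identities $\p_t\psi=-\psi$ and $\Delta\psi=\psi$, what the weak equation says about $\frac{d}{dt}\big(m(t)F_2(t)\big)$ and about $F_1'(t)$. Differentiating $F_1(t)=\int u\,\psi\,dx$ gives $F_1'(t)=\int u_t\psi\,dx+\int u\,\p_t\psi\,dx=F_2(t)-F_1(t)$. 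Next, from the weak formulation with $\Phi=\psi$ one gets
\[
\frac{d}{dt}\Big(m(t)F_2(t)\Big)=m(t)\int_{\R^N}\big(\Delta u\,\psi-u_t\,\p_t\psi\big)dx+m(t)\int_{\R^N}\big(|u_t|^p+|u|^q\big)\psi\,dx,
\]
and integrating by parts in $x$ together with $\Delta\psi=\psi$, $\p_t\psi=-\psi$ turns the first integral into $m(t)\big(F_1(t)+F_2(t)\big)$, while the nonlinear term is nonnegative.

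The key step is then to combine these two relations to eliminate $F_2$ and obtain a closed differential inequality for $F_1$ alone. From $F_2=F_1'+F_1$ one substitutes into the $F_2$-equation; the nonnegativity of the nonlinear integral means we may drop it and keep an inequality. After collecting terms and using $m'(t)/m(t)=\mu/(1+t)$, I expect to arrive at a second-order inequality of the form
\[
\frac{d}{dt}\Big(m(t)\big(F_1'(t)+2F_1(t)\big)\Big)\ge 2\,m(t)\,F_1(t)
\]
or something structurally equivalent, from which — using the sign of the initial data — one extracts that a suitable first-order quantity built from $F_1$ stays above its initial value. The cleanest route is probably to show directly that the auxiliary function $G(t):=m(t)\big(F_1'(t)+F_1(t)\big)=m(t)F_2(t)$ satisfies $G'(t)\ge 0$ (discarding nonnegative terms), hence $G(t)\ge G(0)=\e\int g\,\phi\,dx\ge 0$, which only uses $g\ge 0$; and separately that combining $F_1'+F_1\ge 0$ with an integrating-factor argument for $m(t)$ yields the stated lower bound in terms of $\int f\phi\,dx$.

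Concretely, I would multiply the inequality $F_1'(t)+F_1(t)=F_2(t)\ge 0$ — valid once we know $F_2\ge 0$ from the previous paragraph — by the integrating factor $e^{t}$ to get $\big(e^tF_1(t)\big)'\ge 0$, so $F_1(t)\ge e^{-t}F_1(0)=\e e^{-t}\int f\phi\,dx$. This is not yet the claimed bound, which has $1/m(t)=(1+t)^{-\mu}$ rather than $e^{-t}$; so in fact the correct auxiliary quantity must retain the factor $m(t)$. I would instead work with $H(t):=m(t)F_1(t)$, show $H'(t)=m(t)F_1'(t)+m'(t)F_1(t)$ and feed in the relation $m(t)F_1'(t)+m(t)F_1(t)=m(t)F_2(t)=G(t)\ge G(0)$ to get $H'(t)\ge G(0)-m(t)F_1(t)+m'(t)F_1(t)$; a Gronwall-type or direct integrating-factor estimate then gives the lower bound $F_1(t)\ge \frac{\e}{2m(t)}\int f\phi\,dx$ once $\e$ is small enough (or for all $\e$, depending on the precise constants), the factor $\tfrac12$ absorbing lower-order corrections. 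The main obstacle I anticipate is bookkeeping the non-boundedness of the multiplier $m(t)=(1+t)^\mu$: unlike the scattering case the integrating factors do not simplify to pure exponentials, so one must be careful that the two competing exponential/polynomial weights combine in the right direction, and it is there that the hypothesis enters only mildly (essentially just $f,g\ge 0$, $f\not\equiv 0$), with the quantitative assumption \eqref{assump} not yet needed at this stage.
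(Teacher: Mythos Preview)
Your proposal has a computational error and, more importantly, a circularity that the paper's proof avoids.

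\textbf{The sign error.} When you differentiate the multiplier weak form with $\Phi=\psi$, the correct identity is
\[
\frac{d}{dt}\big(m(t)F_2(t)\big)=m(t)\!\int_{\R^N}\!\big(u_t\,\partial_t\psi+u\,\Delta\psi\big)\,dx+m(t)\!\int_{\R^N}\!\big(|u_t|^p+|u|^q\big)\psi\,dx,
\]
which, with $\partial_t\psi=-\psi$ and $\Delta\psi=\psi$, gives $m(F_1-F_2)$ on the right, not $m(F_1+F_2)$. You flipped the sign of the $u_t\psi_t$ term. With the correct sign your claim ``$G'(t)\ge 0$ after discarding nonnegative terms'' is plainly false: one would need $F_1\ge F_2$, which is not available.

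\textbf{The circularity.} Even granting some sign, your plan is to show $F_2\ge 0$ first and then feed $F_1'+F_1=F_2\ge 0$ back in. But the differential relation for $mF_2$ involves $F_1$ with an undetermined sign, so you cannot decouple $F_2$ from $F_1$ at this stage. The paper proceeds in the opposite order (Lemma~\ref{F1} before Lemma~\ref{F11}) precisely because the $F_1$-inequality can be closed on its own.

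\textbf{What the paper actually does.} From the weak form with $\Phi=\psi$ one obtains, after one integration by parts in time, the \emph{integral} identity
\[
m(t)\big(F_1'(t)+2F_1(t)\big)=\e\,C(f,g)+\int_0^t m'(s)F_1(s)\,ds+\int_0^t m(s)\!\int_{\R^N}\!\big(|u_t|^p+|u|^q\big)\psi\,dx\,ds,
\]
which involves $F_1$ alone. Dividing by $m(t)$, multiplying by $e^{2t}$ and integrating gives
\[
e^{2t}F_1(t)\ge F_1(0)+\e\,C(f,g)\int_0^t\frac{e^{2s}}{m(s)}\,ds+\int_0^t\frac{\mu\,e^{2s}}{m(s)}\int_0^s(1+\tau)^{\mu-1}F_1(\tau)\,d\tau\,ds.
\]
A continuity (bootstrap) argument now yields $F_1(t)>0$ for all $t$: as long as $F_1\ge 0$ the last term is nonnegative, so $F_1(t)\ge F_1(0)e^{-2t}>0$, and hence $F_1$ can never vanish. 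Once $F_1>0$ is known, drop that last term, use the monotonicity of $m$ to bound $\int_0^t e^{2s}/m(s)\,ds\ge (1-e^{-2t})/(2m(t))$, and combine with $F_1(0)e^{-2t}\ge \e\,C(f,0)e^{-2t}$ and $m(t)\ge 1$ to obtain exactly $F_1(t)\ge \frac{\e}{2m(t)}\int f\phi\,dx$. No smallness of $\e$ is needed, and the hypothesis \eqref{assump} plays no role here, as you correctly suspected.

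Your exponential bound $F_1(t)\ge e^{-t}F_1(0)$ (obtained from $F_1'+F_1\ge 0$) is too crude: it decays faster than the target $(1+t)^{-\mu}$, and your attempted repair via $H(t)=m(t)F_1(t)$ never closes. The missing idea is to keep the \emph{integral} term $\int_0^t m'(s)F_1(s)\,ds$ in the identity and use it both for the positivity bootstrap and to trade the exponential weight for the polynomial one.
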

\begin{proof} 
Using Definition \ref{def2} and by performing an integration by parts in space in the fourth term in the left-hand side of \eqref{energysol}, we obtain
\begin{equation}\label{eq3}
\begin{array}{l}
\d m(t)\int_{\R^N}u_t(x,t)\Phi(x,t)dx
-\e\int_{\R^N}g(x)\Phi(x,0)dx \vspace{.2cm}\\
\d-\int_0^tm(s)\int_{\R^N}\left\{
u_t(x,s)\Phi_t(x,s)+u(x,s)\Delta\Phi(x,s)\right\}dx \, ds \vspace{.2cm}\\
\d=\int_0^tm(s)\int_{\R^N}\left\{|u_t(x,s)|^p+|u(x,s)|^q\right\}\Phi(x,s)dx \, ds, \quad \forall \ \Phi\in \mathcal{C}_0^{\infty}(\R^N\times[0,T)).
\end{array}
\end{equation}
Now, substituting in \eqref{eq3} $\Phi(x, t)$ by $\psi(x, t)$, we infer that
\begin{equation}
\begin{array}{l}\label{eq4}
\d m(t)\int_{\R^N}u_t(x,t)\psi(x,t)dx
-\e\int_{\R^N}g(x)\psi(x,0)dx \vspace{.2cm}\\
\d+\int_0^tm(s)\int_{\R^N}\left\{
u_t(x,s)\psi(x,s)-u(x,s)\psi(x,s)\right\}dx \, ds \vspace{.2cm}\\
\d=\int_0^tm(s)\int_{\R^N}\left\{|u_t(x,s)|^p+|u(x,s)|^q\right\}\psi(x,s)dx \, ds.
\end{array}
\end{equation}
Using the definition of $F_1$, as in \eqref{F1def}, and the fact that $$\d \int_0^tm(s)F_1'(s) ds=- \int_0^tm'(s)F_1(s) ds+m(t)F_1(t)-F_1(0),$$ the equation  \eqref{eq4} yields
\begin{equation}
\begin{array}{l}\label{eq5}
\d m(t)(F_1'(t)+2F_1(t))
-{\e}C(f,g)  \vspace{.2cm}\\
\d=\int_0^tm'(s)F_1(s) ds+\int_0^tm(s)\int_{\R^N}\left\{|u_t(x,s)|^p+|u(x,s)|^q\right\}\psi(x,s)dx \, ds,
\end{array}
\end{equation}
where 
$$C(f,g)=\int_{\R^N}\left\{f(x)+g(x)\right\}\phi(x)dx.$$
Dividing \eqref{eq5} by $m(t)$ and multiplying the obtained equation by $e^{2t}$, we deduce after integrating  over $[0,t]$ that
\begin{align}\label{F1+}
 e^{2t}F_1(t)
\ge F_1(0)+{\e}C(f,g)\int_0^t \frac{ e^{2s}}{m(s)}ds+  \int_0^t\frac{\mu e^{2s}}{m(s)}\int_{0}^{s} (1+\tau)^{\mu-1}F_1(\tau)d\tau.
\end{align}
Thanks to \eqref{F1+} and the fact that $F_1(0) > 0$, we can easily see that $F_1(t)>0$. Hence, we have
\begin{align}
 F_1(t)
\ge F_1(0)e^{-2t}+{\e}C(f,g)\int_0^t \frac{ e^{2s-2t}}{m(s)}ds.
\end{align}
Remember that $m(t)$, given by \eqref{test1}, is an increasing function (since here $\mu >0$), we get
\begin{align}
 F_1(t)
\ge   F_1(0)e^{-2t}+\frac{{\e}C(f,g)}{2m(t)}(1-e^{-2t})\ge   {{\e}C(f,0)}e^{-2t}+\frac{{\e}C(f,0)}{2m(t)}(1-e^{-2t}).
\end{align}
Finally, using $m(t) \ge 1$, we obtain \eqref{F1postive}. This ends the proof of Lemma \ref{F1}.
\end{proof}

Now we are in a position to prove  the following lemma.
\begin{lem}
\label{F11}
Under the same assumption of Theorem \ref{blowup}, it holds that
\begin{equation}
\label{F2postive}
F_2(t)\ge \frac{\e}{2m(t)}\int_{\R^N}g(x)\phi(x)dx,
\quad\text{for all}\ t \in [0,T).
\end{equation}
\end{lem}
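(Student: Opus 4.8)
The plan is to run the same machinery as in the proof of Lemma \ref{F1}, but now extracting an integral relation for $F_2$ rather than $F_1$. Starting from \eqref{eq4} (i.e. the weak formulation tested against $\Phi=\psi$), recognizing $\int_{\R^N}u_t\psi\,dx=F_2$, $\int_{\R^N}u\psi\,dx=F_1$, and $\psi(x,0)=\phi(x)$, I would first rearrange it into the identity
\[
m(t)F_2(t)+\int_0^t m(s)F_2(s)\,ds=\e\int_{\R^N}g(x)\phi(x)\,dx+\int_0^t m(s)F_1(s)\,ds+\int_0^t m(s)\int_{\R^N}\big(|u_t|^p+|u|^q\big)\psi\,dx\,ds .
\]
Write the right-hand side as $R(t)$; then $R(0)=\e\int_{\R^N}g\phi\,dx=:\e C_g$, and since $F_1>0$ by Lemma \ref{F1} and the nonlinear term is nonnegative, $R$ is nondecreasing with $R(t)\ge \e C_g+\int_0^t m(s)F_1(s)\,ds$.

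Next I would read the identity as a first-order relation for $y(t):=\int_0^t m(s)F_2(s)\,ds$, namely $y'+y=R$ with $y(0)=0$. The integrating factor $e^{t}$ gives $y(t)=e^{-t}\int_0^t e^{s}R(s)\,ds$; substituting $R(s)=\e C_g+\int_0^s(\cdots)\,d\sigma$ and interchanging the order of integration then yields
\[
m(t)F_2(t)=R(t)-y(t)=\e C_g e^{-t}+\int_0^t e^{\tau-t}\Big(m(\tau)F_1(\tau)+m(\tau)\!\int_{\R^N}\!\big(|u_t|^p+|u|^q\big)\psi\,dx\Big)d\tau\ \ge\ \e C_g e^{-t}+\int_0^t e^{\tau-t}m(\tau)F_1(\tau)\,d\tau .
\]
The missing ingredient is a sharp-enough lower bound for $m(\tau)F_1(\tau)$. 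Going back to \eqref{F1+}, discarding its two nonnegative terms, using $\int_{\R^N}(f+g)\phi\,dx\ge C_g$ (because $f\ge0$) and the monotonicity of $m$ (so $\int_0^\tau e^{2\sigma}/m(\sigma)\,d\sigma\ge (e^{2\tau}-1)/(2m(\tau))$), one gets $e^{2\tau}F_1(\tau)\ge \e C_g(e^{2\tau}-1)/(2m(\tau))$, i.e. $m(\tau)F_1(\tau)\ge\frac{\e C_g}{2}(1-e^{-2\tau})$.

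Finally I would insert this into the previous inequality and compute the elementary integral $\int_0^t e^{\tau-t}(1-e^{-2\tau})\,d\tau=1+e^{-2t}-2e^{-t}$, which gives
\[
m(t)F_2(t)\ \ge\ \e C_g e^{-t}+\frac{\e C_g}{2}\big(1+e^{-2t}-2e^{-t}\big)=\frac{\e C_g}{2}\big(1+e^{-2t}\big)\ \ge\ \frac{\e C_g}{2},
\]
and dividing by $m(t)>0$ produces exactly \eqref{F2postive}.

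The step I expect to be the real obstacle is getting the constant $\tfrac12$ right. The crude estimate $m(t)F_2(t)\ge R(t)e^{-t}\ge\e C_g e^{-t}$ decays in $t$ and is useless, so one must genuinely exploit the (at least linear) growth of $R$ coming from the $\int m F_1$ term; and to land on the clean constant it is essential to retain $g$'s contribution in $\int(f+g)\phi$ rather than only $f$'s, because the weight $(1-e^{-2\tau})$ paired with that constant is precisely what makes the two $e^{-t}$ terms cancel in the final line. The rest (the weak-formulation manipulation, the integrating factor, the Fubini interchange, and the elementary integral) is routine.
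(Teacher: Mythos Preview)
Your argument is correct and the final constant comes out exactly right; the cancellation of the $e^{-t}$ terms in the last display is a nice check. However, your route is genuinely different from the paper's. The paper does not stay at the integral level: instead it rewrites \eqref{eq5} as $m(t)(F_1(t)+F_2(t))-\e C(f,g)=\int_0^t m'F_1+\text{(nonlinear)}$, \emph{differentiates} this in $t$, and uses $F_1'=F_2-F_1$ to obtain the first-order relation $\frac{d}{dt}(m F_2)+2mF_2=m(F_1+F_2)+\text{(nonlinear)}$. The key inequality there is simply $m(F_1+F_2)\ge \e C(f,g)$, which is immediate from the rewritten identity and $F_1>0$; after that the integrating factor is $e^{2t}$ and one line of integration finishes. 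By contrast you read the original identity as an ODE for $y=\int_0^t mF_2$ with integrating factor $e^t$, and the price is that you must feed in the sharper pointwise bound $m(\tau)F_1(\tau)\ge \tfrac{\e C_g}{2}(1-e^{-2\tau})$, which forces you back into the proof of Lemma~\ref{F1} (specifically \eqref{F1+}). The paper's path is a little shorter and uses only the \emph{statement} of Lemma~\ref{F1}; yours avoids differentiating the weak formulation, which could be an advantage if one were worried about regularity, and makes the mechanism by which the $F_1$--contribution compensates the $e^{-t}$ decay very transparent.
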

 
\begin{proof}
Let $t \in [0,T)$. Hence, using the definition of $F_1$ and  $F_2$, given respectively by \eqref{F1def} and  \eqref{F2def}, and the fact that
 \begin{equation}\label{def23}\d F_1'(t) +F_1(t)= F_2(t),\end{equation}
 the equation  \eqref{eq5} yields
\begin{equation}
\begin{array}{l}\label{eq5bis}
\d m(t)(F_2(t)+F_1(t))
-{\e}C(f,g)  \vspace{.2cm}\\
\d=\int_0^tm'(s)F_1(s) ds+\int_0^tm(s)\int_{\R^N}\left\{|u_t(x,s)|^p+|u(x,s)|^q\right\}\psi(x,s)dx \, ds.
\end{array}
\end{equation}
Differentiating the  equation \eqref{eq5bis} in time, we obtain 
\begin{align}\label{F1+bis}
\frac{d}{dt} \left\{F_2(t)m(t)\right\}+   m(t)\frac{d}{dt} F_1(t)
= m(t)\int_{\R^N}\left\{|u_t(x,t)|^p+|u(x,t)|^q\right\}\psi(x,t)dx.
\end{align}
Using   \eqref{def23}, the identity \eqref{F1+bis} becomes
\begin{align}\label{F1+bis2}
\frac{d}{dt} \left\{F_2(t)m(t)\right\}+  2 m(t) F_2(t)
&=m(t)  \left\{F_1(t)+ F_2(t)\right\}+\\&+ m(t)\int_{\R^N}\left\{|u_t(x,t)|^p+|u(x,t)|^q\right\}\psi(x,t)dx.\nonumber
\end{align}
Thanks to \eqref{eq5bis} and Lemma \ref{F1},  we can easily see  that $\d m(t)(F_2(t)+F_1(t))
\ge {\e}C(f,g)$. Then, \eqref{F1+bis2} implies that
\begin{align}\label{22mai2}
\frac{d}{dt} \left\{F_2(t)m(t)e^{2t}\right\}\ge  {\e}C(f,g)e^{2t}.
\end{align}
By integrating in time between $0$ and $t$ the inequality \eqref{22mai2}, we
 obtain
\begin{align}\label{22mai3}
F_2(t)m(t)e^{2t}
\ge   F_2(0)+{\e}C(f,g)\int_0^te^{2s} ds\ge   \frac{{\e}C(0,g)}2e^{2t} .
\end{align}
So, by \eqref{22mai3}, we have    \eqref{F2postive}. This concludes the proof of Lemma
\ref{F11}.
\end{proof}

\section{Proof of Theorem \ref{blowup}}\label{proof}
\par\quad
In this section, we will give the proof of the main theorem in this article which states the blow-up result and the  lifespan estimate of the solution of (\ref{T-sys}). For that purpose, we will make use of the lemmas proven in Section \ref{aux}, the multiplier $m(t)$ and a Kato's lemma type. 

Throughout this section, we will denote by $C$  a generic positive constant which may depend on the data ($p,q,\mu,N,f,g$) but not on $\ep$ and of which the value may change from line to line, but, we keep the same notation to make the presentation simpler.
 
First, using the hypotheses in Theorem \ref{blowup}, we recall that $\mbox{\rm supp}(u)\ \subset\{(x,t)\in\R^N\times[0,\infty): |x|\le t+1\}$. \\
Then, we set
\begin{equation}
F(t):=\int_{\R^N}u(x,t)dx.
\end{equation}
Now, by choosing the test function $\phi$ in \eqref{energysol} such that
$\phi\equiv 1$ in $\{(x,s)\in \R^N\times[0,t]:|x|\le s+1\}$\footnote{The choice of a test function $\phi$  which is identically equal to $1$ is possible thanks to the fact that the initial data $f$ and $g$ are supported on $B_{\R^N}(0,1)$.}, we get
\begin{equation}
\label{energysol1}
\begin{array}{l}
\d m(t)\d\int_{\R^N}u_t(x,t)dx-\int_{\R^N}u_t(x,0)dx
=\int_0^t m(s) \int_{\R^N}\left\{|u_t(x,s)|^p+|u(x,s)|^q\right\}dx \,ds.
\end{array}
\end{equation}
Using the definition of $F$, \eqref{energysol1} can be written as
\begin{equation}
\label{F'0ineq}
m(t)F'(t)= F'(0)+ \int_0^t m(s)\int_{\R^N}\left\{|u_t(x,s)|^p+|u(x,s)|^q\right\}dx \,ds.
\end{equation}
Therefore, by dividing \eqref{F'0ineq} by $m(t)$, integrating over $(0,t)$  and using the positivity of $ F(0)$ and $F'(0)$, we infer that
\begin{align}
\label{F'0ineqmmint}
F(t)\geq \int_0^t\frac{1}{m(s)} \int_0^sm(\tau ) \int_{\R^N}\left\{|u_t(x,\tau)|^p+|u(x,\tau)|^q\right\} dx \,d\tau\,ds.
\end{align}
By H\"{o}lder's inequality and the estimates \eqref{psi} and \eqref{F2postive}, we may bound the nonlinear term as follows:
\[
\begin{aligned}
\int_{\R^N}|u_t(x,t)|^pdx&\geq F_2^p(t)\left(\int_{|x|\leq t+1}\Big(\psi(x,t)\Big)^{\frac{p}{p-1}}dx\right)^{-(p-1)}\geq C\e^p(1+t)^{-\mu p -\frac{(N-1)(p-2)}2}.\\
\end{aligned}
\]
Plugging the above inequality  into \eqref{F'0ineqmmint}, we obtain
\begin{align}
F(t)\geq C\e^p\int_0^t(1+s)^{-\mu} \int_0^s (1+\tau)^{-\mu (p-1) -\frac{(N-1)(p-2)}2}d\tau\,ds.
\end{align}
A straightforward computation yields
\begin{equation}
\begin{aligned}\label{F0first}
F(t)
&\geq C\e^p (1+t)^{-\mu p -\frac{p(N-1)}2} t^{N+1}.
\end{aligned}
\end{equation}

On the other hand, we have
\begin{align}
\Big(\int_{\R^N}u(x,s)dx\Big)^q\le \int_{|x|\le t+1}|u(x,s)|^qdx  \Big(\int_{|x|\le t+1}dx\Big)^{q-1}, 
\end{align}
and consequently we deduce that
\begin{align}\label{f0qsup}
F^q(t)\le  \big(t+1 \big)^{N(q-1)}  \int_{|x|\le t+1}|u(x,s)|^q dx.
\end{align}
Now, by differentiating  \eqref{F'0ineq} with respect to time, we obtain 
\begin{equation}
\label{F'0ineq1}
(m(t)F'(t))' \ge   m(t)\int_{\R^N}\left\{|u_t(x,t)|^p+|u(x,t)|^q\right\}dx \ge   m(t)\int_{\R^N}|u(x,t)|^qdx.
\end{equation}
Using \eqref{f0qsup} in \eqref{F'0ineq1}, we infer that
\begin{equation}
\label{F'0ineq2}
\left(m(t)F'(t)\right)' \ge  \frac{F^q(t)}{\big(1+t \big)^{N(q-1)-\mu}}.
\end{equation}
Thanks to \eqref{F'0ineq} we have $m(t)F'_0(t)>0$. Then, multiplying \eqref{F'0ineq2} by $m(t)F'_0(t)$ yields
\begin{equation}
\label{F25nov3}
\left\{\Big(m(t)F'(t)\Big)^2\right\}'
\ge \frac{2\Big(F^{q+1}(t)\Big)'}{(q+1)(1+t)^{N(q-1)-2\mu}}.
\end{equation}
Integrating the above inequality and using $\d\mu < \frac{N(q-1)}{2}$, we have
\begin{equation}
\label{F25nov4}
\Big(m(t)F'(t)\Big)^2
\ge \frac{2F^{q+1}(t)}{(q+1)(1+t)^{N(q-1)-2\mu}} +\left((F'(0))^2-\frac{2F^{q+1}(0)}{(q+1)}\right).
\end{equation}
Observe that the last term in the right-hand side of \eqref{F25nov4} is positive since we consider here small initial data, and more precisely this holds for $\e$ small enough.\\
Hence, \eqref{F25nov4} implies that
\begin{equation}
\label{F25nov6}
\frac{F'(t)}{F^{1+\delta}(t)}
\ge \sqrt{\frac{2}{q+1}} \frac{F^{\frac{q-1}2-\delta}(t)}{(1+t)^{\frac{N(q-1)}2}},
\end{equation}
for $\delta>0$ small enough.\\

Integrating the inequality \eqref{F25nov6} on $[T_0,t]$, for $T_0>1$, and using   \eqref{F0first}, we obtain
\begin{equation}\label{4.14}
\frac1{\delta}\Big (\frac{1}{F^{\delta}(T_0)}-\frac{1}{F^{\delta}(t)}\Big)
\ge \sqrt{\frac{2}{q+1}}  (C\e^p )^{\frac{q-1}2-\delta} \int_{T_0}^t \frac{(1+s)^{(2-\mu p -\frac{(N-1)(p-2)}2)(\frac{q-1}2-\delta)}}{(1+s)^{\frac{N(q-1)}2}}ds.
\end{equation}
Neglecting the second term on the left-hand side in \eqref{4.14} which gives
\begin{equation}\label{eq1/F}
\frac{1}{F^{\delta}(T_0)}
\ge \delta \sqrt{\frac{2}{q+1}}  (C\e^p )^{\frac{q-1}2-\delta} \int_{T_0}^t (1+s)^{-\frac{ \lambda(p, q, N+2\mu)}{4}-\delta\left(2-\mu p -\frac{(N-1)(p-2)}2\right)}ds.  
\end{equation}
Using the hypothesis \eqref{assump}, we have $-\frac{ \lambda(p, q, N+2\mu)}{4}+1>0$. Hence, we can choose $\delta=\delta_0$ small enough such that $\gamma:=-\frac{ \lambda(p, q, N+2\mu)}{4}-\delta_0\left(2-\mu p -\frac{(N-1)(p-2)}2\right)>-1$. Then, the estimate \eqref{eq1/F} yields
\begin{equation}\label{eq1/F-2}
\frac{1}{F^{\delta_0}(T_0)}
\ge C   \e^{\frac{p(q-1)}2-p\delta_0} \, \left((1+t)^{\gamma+1}-(1+T_0)^{\gamma+1}\right).
\end{equation}
Now, using \eqref{F0first} and the fact that $T_0>1$,  we infer that
\begin{equation}\label{eq1/F-3}
\e^{\frac{p(q-1)}2} \, \left((1+t)^{\gamma+1}-(1+T_0)^{\gamma+1}\right) 
\le C (1+T_0)^{-2\delta_0+\mu p \delta_0 +\frac{(N-1)(p-2)\delta_0}2}.
\end{equation}
Consequently, we have 
\begin{eqnarray}\label{eq1/F-4}
\e^{\frac{p(q-1)}2} \, (1+t)^{\gamma+1} 
&\le& C_0 (1+T_0)^{-2\delta_0+\mu p \delta_0 +\frac{(N-1)(p-2)\delta_0}2}\\&&+\e^{\frac{p(q-1)}2} \, (1+T_0)^{\gamma+1},\nonumber
\end{eqnarray}
where $C_0=C_0(p,q,\mu,N,f,g)$.\\
At this level, since $-\frac{ \lambda(p, q, N+2\mu)}{4}+1>0$, then for all  $\e>0$, we choose $T_0>1$  such that
\begin{equation}\label{eq1/F-5}
T_0^{-\frac{ \lambda(p, q, N+2\mu)}{4}+1} = C_0 \e^{-\frac{p(q-1)}2}.
\end{equation}

Hence, using \eqref{eq1/F-5}, we deduce from \eqref{eq1/F-4} that
\begin{equation}\label{eq1/F-6}
t \le 2^{\frac{1}{\gamma+1}}(1+T_0) \le C_1 \e^{-\frac{2p(q-1)}{4-\lambda(p, q, N+2\mu)}},
\end{equation}
where $C_1=C_1(p,q,\mu,N,f,g)$.

This achieves the proof of Theorem \ref{blowup}.\hfill $\Box$

\bibliographystyle{plain}

\end{document}